\newcommand{\R}{\mathbb{R}}
\newtheorem{corollary}{Corollary}
\newtheorem{lemma}{Lemma}
\newtheorem{property}{Property}
\newtheorem{proposition}{Proposition}
\DeclareMathOperator{\sign}{sign}
\newcommand{\ignore}[1]{}
\title{More bounds on the diameters of convex polytopes
}
\author{David Bremner, Antoine Deza, William Hua, and Lars Schewe}
\author[1]{David Bremner}
\affil[1]{University of New Brunswick}
\author[2]{Antoine Deza}
\author[2]{William Hua}
\affil[2]{McMaster University}
\author[3]{Lars Schewe}
\affil[3]{Technische Universit\"at Darmstadt}
\begin{document}

\maketitle

\begin{abstract}

Let $\Delta(d,n)$ be the maximum possible edge diameter over
all $d$-dimensional polytopes defined by $n$ inequalities.
The Hirsch conjecture, formulated in 1957, suggests that
$\Delta(d,n)$ is no greater than $n-d$.  No polynomial bound
is currently known for $\Delta(d,n)$, the best one being
quasi-polynomial due to Kalai and Kleitman in 1992.  Goodey
showed in 1972 that $\Delta(4,10)=5$ and $\Delta(5,11)=6$,
and more recently, Bremner and Schewe showed $\Delta(4,11)=
\Delta(6,12)=6$.  In this follow-up, we show that
$\Delta(4,12)=7$ and present strong evidence that $\Delta(5,12)=\Delta(6,13)=7$.

\end{abstract}

Finding a good bound on the maximal edge diameter $\Delta(d,n)$ of a
polytope in terms of its dimension $d$ and the number of its facets
$n$ is one of the basic open questions in polytope theory
\cite{BG}. Although some bounds are known, the behaviour of the
function $\Delta(d,n)$ is largely unknown. The Hirsch conjecture,
formulated in 1957 and reported in \cite{GD}, states that $\Delta(d,n)$
is linear in $n$ and $d$: $\Delta(d,n) \leq n-d$. The conjecture is
known to hold in small dimensions, i.e., for $d \leq 3$ \cite{VK},
along with other specific pairs of $d$ and $n$
(Table~\ref{before}). However, the asymptotic behaviour of
$\Delta(d,n)$ is not well understood: the best upper bound --- due to
Kalai and Kleitman --- is quasi-polynomial \cite{GKDK}.

In this article we will show that $\Delta(4,12)=7$ and present strong
evidence for $\Delta(5,12)=\Delta(6,13)=7$. The first of these new
values is of particular interest since it indicates that the Hirsch bound is
not sharp in dimension $4$.

Our approach is computational and builds on the approach used by
Bremner and Schewe \cite{DBLS}.  Section~\ref{sec:general} introduces
our computational framework and some related background. We then
discuss our results in Section~\ref{sec:results}.

\begin{table}
  \begin{center}
    \begin{tabular}{cc|ccccc}
      & & \multicolumn{5}{|c}{$n-2d$} \\
      & & 0 & 1 & 2 & 3 & 4 \\
      \hline
      \multirow{5}{*}{$d$} & 4 & 4 & 5 & 5 & 6 & 7+ \\
      & 5 & 5 & 6 & 7-8 & 7+ & 8+ \\
      & 6 & 6 & 7-9 & 8+ & 9+ & 9+ \\
      & 7 & 7-10 & 8+ & 9+ & 10+ & 11+ \\
      & 8 & 8+ & 9+ & 10+ & 11+ & 12+
    \end{tabular}
  \end{center}

  \caption{Previously known bounds on $\Delta(d,n)$ \cite{DBLS,
    PG, FHVK, VKDW}. \label{before}}
\end{table}

\section{General approach}
\label{sec:general}

In this section we give a summary of our general approach.
This is substantially similar to that in~\cite{DBLS}, and the reader is 
referred there for more details. 

It is easy to see via a perturbation argument that $\Delta(d,n)$ is
always achieved by some simple polytope.  
By a reduction applied from \cite{VKDW}, we only
need to consider \emph{end-disjoint} facet-paths:  paths where
the end vertices do not lie on a common facet
(\emph{facet-disjointness}).
It will be convenient both
from an expository and a computational view to work in a polar setting
where we consider the lengths of facet-paths on the boundary of
simplicial polytopes. 
We apply the term \emph{end-disjoint} equally to the corresponding facet
paths, where it has the simple interpretation that two end facets do
not intersect.

\newcommand{\rest}{x_1\dots x_{d-1}}
For any set $Z=\{\,x_1\dots x_{r-2},y_1\dots y_4\,\}\subset \R^r$, as a special case
of the Grassmann-Pl\"{u}cker relations~\cite[\S 3.5]{blswz} on determinants we have
\begin{equation}
  \label{eq:gp-det}
  \begin{split}
  \det(\rest,y_1,y_2)\cdot\det(\rest,y_3,y_4)\\
 + \det(\rest,y_1,y_4)\cdot\det(\rest,y_2,y_3)\\
- \det(\rest,y_1,y_3)\cdot\det(\rest,y_2,y_4)=0
  \end{split}
\end{equation}
We are in particular interested in the case where $r=d+1$ and $Z$
represents $(d+3)$-points in $\R^d$ in homogeneous coordinates; the
various determinants are then signed volumes of simplices.  In the
case of points drawn from the vertices of a simplicial polytope, we
may assume without loss of generality that these simplices are never
flat (i.e.\ determinant $0$). Thus if we define $\chi(v_1\dots
v_{d+1})=\sign(\det(v_1\dots v_{d+1}))$ it follows from
\eqref{eq:gp-det} that
\begin{eqnarray*}
  \{  \chi(\rest, y_1, y_2) \chi(\rest, y_3, y_4), & \\
     -\chi(\rest, y_1, y_3) \chi(\rest, y_2, y_4), & \\
      \chi(\rest, y_1, y_4) \chi(\rest, y_2, y_3) \} &
   = \{ -1, +1 \}.
\end{eqnarray*}
Any alternating map $\chi : E^{d+1} \to \{-,+\}$ satisfying these
constraints for all $(d+3)$-subsets is called a \emph{uniform
  chirotope}; this is one of the many axiomatizations of \emph{uniform
  oriented matroids}~\cite{blswz}.  The facets and interior points of
a uniform chirotope are straightforward to define in terms of equality
and non-equality of related signs.  In the rest of this paper we call
uniform chirotopes simply chirotopes.

\begin{figure}[htb]\label{3X}
  \begin{center}
    \includegraphics[width=2in]{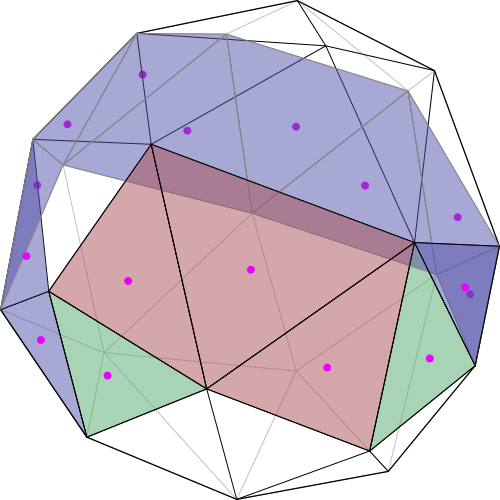}
  \end{center}

  \caption{Illustrating a non-shortest facet-path.}
  \label{fig:shortcut}
\end{figure}

Our general strategy is to show $\Delta(d,n) \neq k$ by generating all
combinatorial types of facet-paths of length $k$ on $n$ vertices in
dimension $d$ and showing that none can be embedded on the boundary of
a chirotope as a shortest path.  Note that if the facet-path uses all
available vertices, then there cannot be any interior points.  In the
general case, we solve a further relaxation of the problem, and show
that even if some points are allowed to be interior, a particular
combinatorial type of path is not embeddable on the boundary of the convex
hull of $n$-points in $\R^d$. In addition to Grassman-Pl\"ucker constraints, 
and those that force the $k$-path onto the boundary, 
we also add constraints preventing the existence of shorter paths
between the starting and ending facets, i.e.\ every potential shortcut is
infeasible by virtue of containing a non-facet.
See Figure~\ref{3X} for an illustration of a shortcut on a
3-dimensional polytope.

Chirotopes can be viewed as a generalization of real polytopes in the
sense that for every real polytope, we can obtain its chirotope
directly.  Therefore, showing the non-existence of chirotopes
satisfying specific properties immediately precludes the existence of
real polytopes holding the same properties.  The search for a
chirotope with a particular facet-path on its boundary is encoded as
an instance of SAT \cite{LS1,LS2}.  The SAT solver used here was
MiniSat \cite{MS}.

The generation of all possible paths for particular $d$ and $n$ begins
with case where the paths are \emph{non-revisiting}, i.e., paths where
no vertex is visited more than once. These can be generated via a
simple recursive scheme, using a bijection with \emph{restricted
  growth strings}.

Multiple revisit paths are generated from paths with one less revisit
by identifying pairs of vertices without introducing extra ridges to
the facet-path or causing the end facets to intersect.

\begin{figure}[htb]\label{39}
  \begin{center}
    \includegraphics[width=2in]{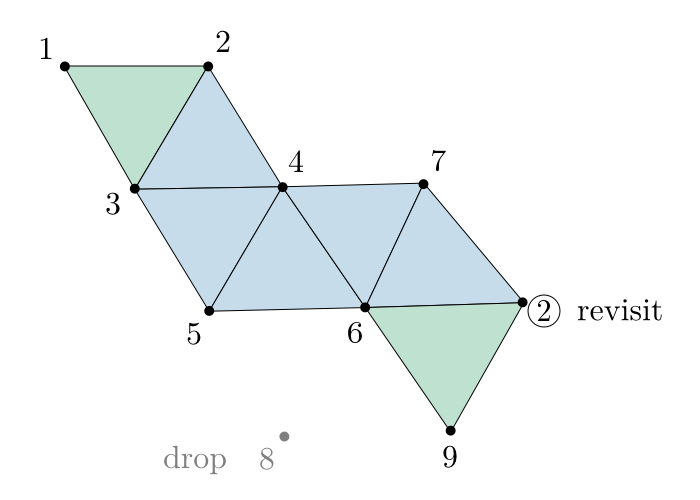}
  \end{center}

  \caption{Example of a facet-path.}
\end{figure}

If a vertex is not used in a facet-path we call this occurrence a
\emph{drop}.
See Figure~\ref{39} for an illustration of a path of length 6 involving
1 revisit (vertex 2) and and 1 drop (vertex 8) with $n=9$ and $d=3$. 
We can then classify paths by dimension $d$, primal-facets/dual-vertices $n$, length $k$,
the number of revisits $m$, and the number of drops $l$.
For end-disjoint paths, a simple counting argument yields:
\begin{eqnarray*}
  m - l &   =  & k +  d - n \\
      m & \leq & k -  d     \\
      l & \leq & n - 2d
\end{eqnarray*}
Table~\ref{paths} provides the number of paths to consider
for each possible combination of values.

\begin{table}[bht]
  \begin{minipage}[b]{0.5\linewidth}
    \centering
    \begin{tabular}{cc|c|cc|c}
      $d$ & $n$ & $k$ & $m$ & $l$ & \# \\
      \hline
      4 & 10 & 6 & 0 & 0 & 15   \\
      4 & 10 & 6 & 1 & 1 & 24   \\
      4 & 10 & 6 & 2 & 2 & 16   \\
      \hline
      4 & 11 & 7 & 0 & 0 & 50   \\
      4 & 11 & 7 & 1 & 1 & 200  \\
      4 & 11 & 7 & 2 & 2 & 354  \\
      4 & 11 & 7 & 3 & 3 & 96   \\
      \hline
      4 & 12 & 8 & 0 & 0 & 160  \\
      4 & 12 & 8 & 1 & 1 & 1258 \\
      4 & 12 & 8 & 2 & 2 & 5172
    \end{tabular}
  \end{minipage}
  \begin{minipage}[b]{0.5\linewidth}
    \centering
    \begin{tabular}{cc|c|cc|c}
      $d$ & $n$ & $k$ & $m$ & $l$ & \# \\
      \hline
      4 & 12 & 8 & 3 & 3 & 7398 \\
      4 & 12 & 8 & 4 & 4 & 1512 \\
      \hline
      5 & 11 & 7 & 1 & 0 & 98   \\
      5 & 11 & 7 & 2 & 1 & 98   \\
      \hline
      5 & 12 & 8 & 1 & 0 & 1079 \\
      5 & 12 & 8 & 2 & 1 & 3184 \\
      5 & 12 & 8 & 3 & 2 & 2904 \\
      \hline
      6 & 12 & 7 & 1 & 0 & 11   \\
      \hline
      6 & 13 & 8 & 1 & 0 & 293  \\
      6 & 13 & 8 & 2 & 1 & 452
    \end{tabular}
  \end{minipage}

  \caption{Number of paths to consider, SAT instances to
    solve. \label{paths}}
\end{table}

With the implementation of~\cite{DBLS}, we were able to reconfirm Goodey's
results for $\Delta(4, 10)$ and $\Delta(5, 11)$ in a matter
of minutes.  While the number of paths to consider increases
with the number of the revisits, in our experiments these paths are 
much less computationally demanding than the ones with 
fewer revisits. For example, the 7,398 paths of length 8
on $4$-polytopes with $12$ facets and involving 3 revisits and 3 drops 
require only a tiny fraction of the computational effort to tackle the
160 paths without a drop or revisit.

In order to deal with the intractability of the problem as the
dimension, number of facets, and path length increased, we proceeded
by splitting our original facet embedding problem into subproblems by
fixing chirotope signs.  We use the (non-SAT based) \texttt{mpc}
backtracking software~\cite{BBG2006} to backtrack to a certain fixed
level of the search tree; every leaf job was then processed in
parallel on the Shared Hierarchical Academic Research Computing
Network (SHARCNET).  Figure~\ref{fig:split} illustrates the splitting
process on a problem generated from the octahedron.  Note that
variable propagation reduces the number of leaves of the tree.

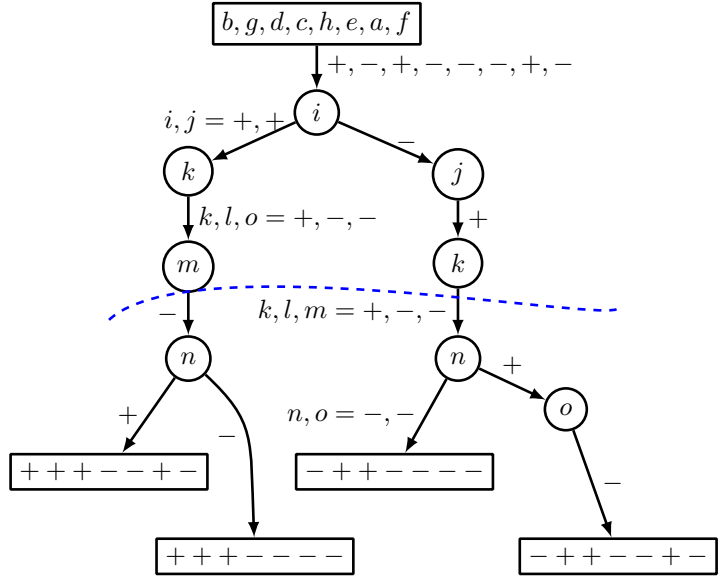
\begin{figure}
  \centering
  \begin{tikzpicture}[anchor=base,>=latex,join=bevel,x=.6,y=0.4]
  \pgfsetlinewidth{1}

  \begin{scope}
    \tikzstyle{every node}=[rectangle,draw]
    \draw (131,604) node (init) {$b,g,d,c,h,e,a,f$};
    \draw (0,180) node[] (suc1) {$+++--+-$};
    \draw (92,100) node[] (suc2) {$+++----$};
    \draw (180,180) node (suc5) {$-++----$};
    \draw (321,100) node (suc4) {$-++--+-$};
  \end{scope}

  \begin{scope}
    \tikzstyle{every node}=[circle,draw,white]
    \draw (220,375) node (k2) {$k$};
    \draw (131,518) node (i) {$i$};
    \draw (220,288) node (n2) {$n$};
    \draw (220,462) node (j) {$j$};
    \draw (50,375) node (m) {$m$}; 
    \draw (288,240) node (o) {$o$};
    \draw (50,462) node (k1) {$k$};
    \draw (50,288) node (n1) {$n$};
\end{scope}

  \begin{scope}
    \tikzstyle{every node}=[circle,draw,anchor=center]
    {\draw (i) node {$i$};}
      \draw (k1) node {$k$}; 
      \draw (m) node {$m$}; 
      {\draw (j) node {$j$};}
      {\draw (k2) node  {$k$};}

      \draw (n2) node {$n$};
      \draw (o) node  {$o$};
      \draw  (n1) node {$n$};
\end{scope}

\begin{scope}
  \tikzstyle{every node}=[right,midway]
  \tikzstyle{every edge}=[draw]
  \draw [->] (init) .. controls (131,578) and (131,563) .. node {$+,-,+,-,-,-,+,-$} (i) ;    

    \draw [->] (i) -- node {$-$} (j);      

    \draw [->] (j) --  node {$+$}(k2);

  \draw [->] (k2) --
        node[left]  {$k,l,m=+,-,-$} (n2);
  
  \draw [->] (n2) -- node[above] {$+$} (o);

  \draw [->] (o) -- node {$-$} (suc4);
  \path [->] (n2) edge   node[left] {$n,o=-,-$}  (suc5);

    
\end{scope}

\begin{scope}
  \tikzstyle{every node}=[left,midway]
  \tikzstyle{every edge}=[draw]

    \draw [->] (i) -- node[at start] {$i,j=+,+$} (k1);


    \path [->] (k1) edge  node[right] {$k,l,o=+,-,-$} (m);    
    \draw[blue,dashed](0,330)  .. controls (50,400) and (300,330) .. (320,340);

    \path [->] (m) edge node {$-$} (n1);

  \path [->] (n1) edge  node {$+$} (suc1);
  \draw [->] (n1) .. controls (93,233) and (89,218)  .. node {$-$}(suc2);

\end{scope}

\end{tikzpicture}
  \caption{Using partial backtracking to generate subproblems}
  \label{fig:split}
\end{figure}

Jobs requiring a long time to complete were further split and executed
on the cluster until the entire search space was covered.
Table~\ref{splits} provides the number of paths which were
computationally difficult enough to require splitting. For example,
out of $160$ paths of length 8 on $4$-polytopes with $12$ facets
without drop or revisit, 2 required splitting.

\begin{table}[htb]
  \begin{center}
    \begin{tabular}{cc|c|cc|c}
      $d$ & $n$ & $k$ & $m$ & $l$ & \# \\
      \hline
      4 & 12 & 8 & 0 & 0 & 2   \\
      5 & 12 & 8 & 1 & 0 & 15  \\
      5 & 12 & 8 & 2 & 1 & 6   \\
      6 & 13 & 8 & 1 & 0 & 138 \\
      6 & 13 & 8 & 2 & 1 & 63
    \end{tabular}
  \end{center}

  \caption{Number of ``difficult'' paths. \label{splits}}
\end{table}

\section{Results}
\label{sec:results}
Summarizing the computational results, we have:

\begin{proposition} \label{result}
  There are no $(4,12)$-
  polytopes with facet-disjoint vertices at distance 8.
\end{proposition}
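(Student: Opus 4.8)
The plan is to reduce the claim about $(4,12)$-polytopes to a finite case analysis over combinatorial types of facet-paths, and then to dispatch each case by exhibiting the infeasibility of an associated system of sign constraints. First, by the perturbation argument mentioned in Section~\ref{sec:general} we may assume the putative polytope is simple, and passing to the polar we work with a simplicial $4$-polytope on $12$ vertices; the question becomes whether two facet-disjoint facets can be at distance $8$ in the dual graph while that distance is realized by a shortest path. By the reduction of \cite{VKDW} it suffices to treat end-disjoint facet-paths. Using the counting relations $m-l = k+d-n$, $m\le k-d$, $l\le n-2d$ with $d=4$, $n=12$, $k=8$, the only admissible pairs $(m,l)$ of (revisits, drops) are $(0,0),(1,1),(2,2),(3,3),(4,4)$, and the recursive generation scheme (non-revisiting paths via restricted growth strings, then identifications for the revisit cases) produces the finite list of combinatorial types whose cardinalities are the $160,\ 1258,\ 5172,\ 7398,\ 1512$ of Table~\ref{paths}.

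Next, for each such combinatorial type I would build the SAT/chirotope model: Boolean variables for the signs $\chi(v_1\dots v_5)$ over all $5$-subsets of the $12$ vertices, the Grassmann--Pl\"ucker constraints \eqref{eq:gp-det} imposed on every $7$-subset (so that a satisfying assignment is a genuine uniform chirotope), the constraints forcing each ridge of the prescribed path to be a facet of the chirotope's boundary, the end-disjointness constraints on the two terminal facets, and — crucially — for every potential path of length $<8$ between the two end facets a clause asserting that some required ridge along it fails to be a facet (i.e.\ contains an interior point), which makes that shortcut infeasible and hence forces the prescribed $8$-path to be shortest. When the type uses all $12$ vertices ($l=0$) there are no interior points; in the drop cases one additionally allows (but does not require) the dropped vertices to be interior, solving the stated relaxation. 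If \emph{every} one of these SAT instances is unsatisfiable, then no chirotope — and a fortiori no real polytope — realizes a facet-disjoint pair at distance $8$, which is exactly Proposition~\ref{result}.

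The main obstacle is purely computational: the $160$ no-drop/no-revisit instances are the hardest (the revisit-heavy cases are comparatively cheap, as noted), and a handful of them do not terminate directly. For those I would use the splitting strategy of Section~\ref{sec:general}: fix a prefix of chirotope signs with the \texttt{mpc} backtracking code, propagate, and farm the resulting leaf subproblems out in parallel on SHARCNET, recursively re-splitting any leaf that is still too slow, until the search tree is exhausted; Table~\ref{splits} records that only $2$ of the $160$ instances needed this treatment. The only genuinely delicate point beyond raw computing time is correctness of the encoding — making sure the facet/interior-point definitions, the revisit/drop bookkeeping, and especially the exhaustive enumeration of candidate shortcuts are faithful — since the theorem's force comes from the claim that the generated family of SAT instances is complete. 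Given that, unsatisfiability of the whole family yields $\Delta(4,12)\neq 8$, and combined with the known lower bound this pins down $\Delta(4,12)=7$.
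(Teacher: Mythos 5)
Your proposal follows essentially the same route as the paper: reduce via perturbation, polarity, and the Klee--Walkup end-disjointness argument to a finite enumeration of combinatorial path types (the counts of Table~\ref{paths}), encode each as a chirotope/SAT feasibility problem with Grassmann--Pl\"ucker, boundary, and shortcut-exclusion constraints, and certify unsatisfiability of every instance, splitting the hard cases with \texttt{mpc} and parallel computation. This matches the paper's computational proof of Proposition~\ref{result}, including the correct identification of completeness of the enumeration and encoding as the critical correctness issue.
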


Note that we actually prove something slightly stronger: no chirotope
admits a path of length 8 between vertex-disjoint facets on its boundary
for $d=4, n=12$, i.e., there are no so-called $(4, 12)$-\emph{matroid
polytopes} with vertex-disjoint facets at distance 8.
While the non-existence of $k$-length
paths implies the non-existence of $(k+1)$-length paths, it
is not obvious if the non-existence of end-disjoint
$k$-length paths implies the non-existence of $(k+1)$-length
paths.  To be able to rule out vertices (not necessarily facet-disjoint) at
distance $l > k$, we introduce the following lemma.

\begin{lemma} \label{bound}
  If $\Delta(d-1,n-1) < k$ and there is no $(d,n)$-polytope
  with two facet-disjoint vertices at distance $k$, then
  $\Delta(d,n) < k$.
\end{lemma}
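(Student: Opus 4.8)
The plan is to argue by contraposition: suppose $\Delta(d,n) \geq k$, and deduce that either there is a $(d,n)$-polytope with two facet-disjoint vertices at distance $k$, or $\Delta(d-1,n-1) \geq k$. So fix a simple $d$-polytope $P$ with $n$ facets realizing a vertex pair $u, v$ at distance exactly $k$ (we may take $P$ simple by the perturbation argument mentioned in Section~\ref{sec:general}). First I would examine whether $u$ and $v$ lie on a common facet. If they do not, we are immediately in the first case and there is nothing more to prove. So assume $u$ and $v$ share a facet $F$.

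The key step is then a dimension-reduction move: since $P$ is simple, the facet $F$ is itself a simple $(d-1)$-polytope, and it has at most $n-1$ facets (its facets are the nonempty intersections $F \cap G$ over the other facets $G$ of $P$). Both $u$ and $v$ are vertices of $F$. The distance between $u$ and $v$ \emph{within the graph of $F$} is at least their distance in the graph of $P$, because the graph of $F$ is a subgraph of the graph of $P$ (every edge of $F$ is an edge of $P$). Hence $\mathrm{dist}_F(u,v) \geq \mathrm{dist}_P(u,v) = k$, so $F$ is a polytope of dimension $d-1$ with at most $n-1$ facets and diameter at least $k$. If $F$ has exactly $n-1$ facets this gives $\Delta(d-1,n-1) \geq k$ directly; if it has fewer, one invokes the standard monotonicity of $\Delta$ in the number of facets (for fixed dimension), namely $\Delta(d-1,m) \leq \Delta(d-1,n-1)$ for $m \leq n-1$, to conclude $\Delta(d-1,n-1) \geq k$ as well. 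Either way the second disjunct holds, completing the contrapositive.

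The main obstacle — and the point that needs the most care — is the inequality $\mathrm{dist}_F(u,v) \geq \mathrm{dist}_P(u,v)$ together with getting the facet count right. One must be sure that restricting to $F$ cannot \emph{shorten} a path, which is clear since $G(F)$ is an induced subgraph of $G(P)$, but one must also handle the possibility that $F$ has strictly fewer than $n-1$ facets (some $G$ may be disjoint from $F$ or meet it in lower dimension); this is where the monotonicity remark is needed. A secondary subtlety is the role of the hypothesis $\Delta(d-1,n-1) < k$: it is exactly what forbids the shared-facet case, so the lemma is really saying that under that hypothesis the only way to achieve distance $k$ in dimension $d$ with $n$ facets is via a facet-disjoint pair — which Proposition~\ref{result} then rules out for the parameters of interest. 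I would close by noting that combining $\Delta(d-1,n-1)<k$ with the absence of facet-disjoint distance-$k$ pairs leaves no vertex pair at distance $\geq k$ at all, hence $\Delta(d,n) < k$.
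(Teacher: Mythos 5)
Your argument is correct and is essentially the paper's own proof, just phrased as a contrapositive instead of a contradiction: the same dichotomy between a facet-disjoint pair and a pair sharing a facet $F$, followed by the same dimension reduction via $\mathrm{dist}_F(u,v)\geq\mathrm{dist}_P(u,v)=k$ against the hypothesis $\Delta(d-1,n-1)<k$. The one step you compress---that $\Delta(d,n)\geq k$ yields a pair at distance \emph{exactly} $k$, which the paper obtains by truncating a shortest path at its $k$-th vertex---is routine, and your explicit treatment of the possibility that $F$ has fewer than $n-1$ facets (via monotonicity of $\Delta$ in the number of facets) is if anything more careful than the paper's statement that $F$ is a $(d-1,n-1)$-polytope.
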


\begin{proof}
  Assume the contrary.  Let $u$ and $v$ be vertices on a
  $(d,n)$-polytope at distance $l \geq k$.  By considering a
  shortest path from $u$ to $v$, there is a vertex $w$ at
  distance $k$ from $u$.  $u$ and $w$ must share a common
  facet $F$ to prevent a contradiction.  $F$ is a
  $(d-1,n-1)$-polytope with diameter at least $k$.
\end{proof}

By Proposition \ref{result} and because $\Delta(3,11) = 6$
\cite{VK}, we can apply Lemma \ref{bound} to obtain the
following new entry for $\Delta(d,n) $.

\begin{corollary} \label{4127}
  $\Delta(4,12) = 7$
\end{corollary}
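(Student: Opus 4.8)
The plan is to derive Corollary \ref{4127} by combining the two ingredients the authors have just assembled: Proposition \ref{result} and the known value $\Delta(3,11)=6$. First I would note that the lower bound $\Delta(4,12)\geq 7$ is already recorded in Table~\ref{before} (the entry ``7+'' for $d=4$, $n-2d=4$), so the only work is the matching upper bound $\Delta(4,12)\leq 7$, i.e.\ $\Delta(4,12) < 8$. That is exactly what Lemma \ref{bound} delivers once its two hypotheses are checked for $(d,n)=(4,12)$ and $k=8$.

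The key steps, in order, are: (i) verify the first hypothesis of Lemma \ref{bound}, namely $\Delta(d-1,n-1) = \Delta(3,11) < 8$; this is immediate from $\Delta(3,11)=6$, which holds by the three-dimensional case of the Hirsch conjecture \cite{VK} (indeed $6 = 11 - 3 - 2$ falls under the verified range $d\le 3$). (ii) Verify the second hypothesis: there is no $(4,12)$-polytope with two facet-disjoint vertices at distance $8$; this is precisely Proposition \ref{result}, established by the computational search over all relevant combinatorial path types (the $m=l=0,1,2,3,4$ rows for $d=4,n=12$ in Table~\ref{paths}, each reduced to SAT and, where necessary, split via \texttt{mpc} as in Table~\ref{splits}). (iii) Apply Lemma \ref{bound} with $k=8$ to conclude $\Delta(4,12) < 8$, hence $\Delta(4,12)\le 7$. (iv) Combine with the lower bound $\Delta(4,12)\ge 7$ from the existing literature (Table~\ref{before}) to get equality.

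There is essentially no obstacle left at this stage — the corollary is a two-line deduction. The real difficulty has already been absorbed into Proposition \ref{result}, whose proof is the large-scale SAT/backtracking computation described in Section~\ref{sec:general}; that is where one must trust (or re-run) the enumeration of path types and the exhaustive infeasibility certificates for each resulting chirotope-embedding instance. The only subtlety worth flagging in the write-up is the logical gap the authors themselves point out: non-existence of \emph{end-disjoint} length-$k$ paths does not by itself rule out arbitrary vertices at distance $\ge k$, which is exactly why Lemma \ref{bound} (and its use of the facet $F$ as a lower-dimensional polytope carrying a long path) is needed as the bridge. So in the proof I would simply invoke Proposition \ref{result} and $\Delta(3,11)=6$, cite Lemma \ref{bound}, and note the lower bound, without any further computation.
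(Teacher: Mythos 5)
Your proposal is correct and matches the paper's own argument: the authors likewise apply Lemma~\ref{bound} with $k=8$, using $\Delta(3,11)=6$ \cite{VK} and Proposition~\ref{result} to get $\Delta(4,12)\le 7$, and combine this with the known lower bound $\Delta(4,12)\ge 7$ from Table~\ref{before}. No differences worth noting.
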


The computations for $\Delta(5,12)$ and $\Delta(6,13)$
are still underway. In particular, out of the 7,167 $8$-paths
to consider for $5$-polytopes having 12 facets, only 11
paths with 1 revisit and no drop remain to be computed.
If the results for remaining $8$-paths keep on showing
unsatisfiability, it would imply that $\Delta(5,12)\neq 8$
and $\Delta(6,13)\neq 8$.
Since $7 \leq \Delta(5,12) \leq 8$ \cite{DBLS,FHVK}, by
Proposition \ref{result} we could immediately obtain  $\Delta(5,12) = 7$.
We recall the following result of Klee and Walkup~\cite{VKDW}:
\begin{property} \label{0123}
  $\Delta(d,2d+k) \leq \Delta(d-1,2d+k-1) + \lfloor
  k/2 \rfloor + 1$ for $0 \leq k \leq 3$
\end{property}
Applying Property \ref{0123} to $\Delta(5,12) = 7$ would yield a new upper
bound $\Delta(6,13) \leq 8$, from which we could obtain  $\Delta(6,13) = 7$.
Property \ref{0123} along with the 3 new entries for $\Delta(d,n)$ would imply the additional upper bounds:   $\Delta(5,13) \leq 9$, $\Delta(6,14) \leq 11$,
  $\Delta(7,14) \leq 8$, $\Delta(7,15) \leq 12$ and
  $\Delta(8,16) \leq 13$ (see Table~\ref{after}).

\begin{table}[htb]
  \begin{center}
    \begin{tabular}{cc|ccccc}
      & & \multicolumn{5}{|c}{$n-2d$} \\
      & & 0 & 1 & 2 & 3 & 4 \\
      \hline
      \multirow{5}{*}{$d$} & 4 & 4 & 5 & 5 & 6 & \textbf{7} \\
      & 5 & 5 & 6 & \textbf{7} & \textbf{7-9} & 8+ \\
      & 6 & 6 & \textbf{7} & \textbf{8-11} & 9+ & 9+ \\
      & 7 & \textbf{7-8} & \textbf{8-12} & 9+ & 10+ & 11+ \\
      & 8 & \textbf{8-13} & 9+ & 10+ & 11+ & 12+
    \end{tabular}
  \end{center}

  \caption{Summary of bounds on $\Delta(d,n)$ assuming $\Delta(5,12)=\Delta(6,11)=7$.}\label{after}
\end{table}

\newpage

\section{Acknowledgments}

This work was supported by grants from the Natural Sciences
and Engineering Research Council of Canada and MITACS, and by the Canada Research Chair program, 
and made
possible by the facilities of the Shared Hierarchical
Academic Research Computing Network
(\verb|http://www.sharcnet.ca/|).

\noindent {\small 
David Bremner}\\
{\sc Faculty of Computer Science,}
{\sc  University of New Brunswick, Canada}. \\
{\em Email}: bremner{\small @}unb.ca\\

\noindent {\small 
Antoine Deza, William Hua}\\
{\sc Advanced Optimization Laboratory,}
{\sc Department of Computing and Software,}
{\sc  McMaster University, Hamilton, Ontario, Canada}. \\
{\em Email}: deza, huaw{\small @}mcmaster.ca\\

\noindent {\small 
Lars Schewe}\\
{\sc Fachbereich Mathematik,}
{\sc  Technische Universit\"at Darmstadt, Germany}. \\
{\em Email}: schewe{\small @}mathematik.tu-darmstadt.de\\

\end{document}